\newtheorem{proposition}{Proposition}
\newtheorem{theorem}{Theorem}
\theoremstyle{definition}
\theoremstyle{definition}
\title{Orthogonally Additive Sums of Powers\\ of Linear Functionals}
\author{Christopher Boyd, Raymond Ryan \& Nina Snigireva}
\date{}
        \def\@thefnmark{\null}
        \def\footnotetexta{\@footnotetext}
\begin{document}
\baselineskip=.65cm 
\maketitle

\begin{abstract}\footnotetexta{{\bf Keywords:} Banach lattice;  Orthogonally
    additive polynomial.}
\footnotetexta{{\bf MSC(2020):} 46G25; 46B42; 46E05.}
  Let $E$ be a Banach lattice, $\lambda_1,\lambda_2,\ldots,\lambda_k$ 
  non-zero scalars and $\varphi_1,\varphi_2,\ldots,\varphi_k$ pairwise
  independent linear functionals on $E$. We
  show that if $k<m$ then $\sum_{j=1}^k\lambda_j\varphi_j^m$ is orthogonally
  additive if and only if $\varphi_j$ or $-\varphi_j$ is a lattice homomorphism
  for each $j$, $1\le j\le k$. Moreover, for each $m\ge 2$, we provide an
  example to
  show that this result does not extend to the case where $k=m$.
\end{abstract}

\section{Introduction}

Let $E$ be a Banach space. An $m$-homogeneous polynomial $P\colon E\to
\mathbb{R}$ is defined by $P(x)=A(x,\dots,x)$, where $A$ is a bounded symmetric
$m$-linear mapping from $ E^m$ to $\mathbb{R}$. We write $P=\hat A$.
The supremum norm on the space of $m$-homogeneous polynomials is given by
$\|P\|=\sup_{\|x\|\leq 1}|P(x)|$. For further information on polynomials we
refer the reader to \cite{Dinb}.

When $E$ is a Banach lattice we say that an $m$-homogeneous polynomial
$P\colon E\to\mathbb{R}$ with associated symmetric $m$-linear mapping $A$ is
positive if ${A}(x_1,\ldots,x_n)\ge 0$ for all $x_1,\ldots,x_n\ge 0$ in $E$. An
$m$-homogeneous polynomial $P\colon E\to
\mathbb{R}$ is said to be regular if it can be written as the difference of two
positive polynomials.

A $m$-homogeneous polynomial $P\colon E\to\mathbb{R}$ is said to be
orthogonally additive if $P(x+y)=P(x)+P(y)$ whenever $|x|\wedge |y|=0$.
Orthogonally additive polynomials were introduced by Sunderesan \cite{Sun}
and have been an area of active research in recent years. See \cite{ABSV,BLL,
  CLZ,K,PPV}. If $A$
is the symmetric $m$-linear mapping associated with $P$ then it is shown in
\cite[Lemma~4.1]{BB} that $P$ is
orthogonally additive if and only if $A(x_1,\dots,x_m)=0$ whenever some pair,
$x_i$,
$x_j$, of the arguments are disjoint ($|x_i|\wedge |x_j| = 0$ for $i\not=j$).
An $m$-linear mapping with this property is said to be  orthosymmetric.

Let $E$ and $F$ be Banach lattices. A bounded linear operator $T\colon E\to F$
is said to be a lattice homomorphism if it preserves the lattice operations,
namely, $T(x\vee y)=(Tx)\vee (Ty)$ and $T(x\wedge y)=(Tx)\wedge(Ty)$ for all
$x$ and $y$ in $E$.

We have the following characterisation of lattice homomorphisms which we will
use in this paper.

\begin{proposition}\cite[Proposition~1.3.11]{MN}\label{Charhomo}
  Let $E$ and $F$ be Banach lattices and $T\colon E\to F$ be a bounded linear
  operator. The following are equivalent:
  \begin{enumerate}
  \item[(a)] $T$ is a lattice homomorphism.
  \item[(b)] $|Tx|=T|x|$ for all $x$ in $E$.
  \item[(c)] $T^+x\wedge T^-x=0$ for all $x$ in $E$.
  \end{enumerate}
\end{proposition}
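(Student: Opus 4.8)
The plan is to prove the cycle of implications (a)~$\Rightarrow$~(b)~$\Rightarrow$~(c)~$\Rightarrow$~(a). Throughout I write $x^+=x\vee 0$ and $x^-=(-x)\vee 0$ for the positive and negative parts of $x\in E$, and I read the expression $T^+x$ (resp.\ $T^-x$) in~(c) as $T(x^+)$ (resp.\ $T(x^-)$), so that (c) asserts that $T(x^+)\wedge T(x^-)=0$ for every $x\in E$. Every step uses only elementary Riesz-space identities --- $|x|=x\vee(-x)=x^++x^-$, $x=x^+-x^-$, $x^+\wedge x^-=0$, $x\vee y=y+(x-y)^+$, and $x\wedge y=-\bigl((-x)\vee(-y)\bigr)$ --- together with the uniqueness of the representation of an element as a difference of two disjoint positive elements.

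For (a)~$\Rightarrow$~(b) I would simply compute, for $x\in E$, that $T|x|=T\bigl(x\vee(-x)\bigr)=(Tx)\vee T(-x)=(Tx)\vee(-Tx)=|Tx|$, using that $T$ is linear and preserves $\vee$. For (b)~$\Rightarrow$~(c) the point is that (b) forces $T(x^+)=(Tx)^+$ and $T(x^-)=(Tx)^-$: indeed $(Tx)^+=\tfrac12\bigl(Tx+|Tx|\bigr)=\tfrac12\,T\bigl(x+|x|\bigr)=T(x^+)$, and symmetrically for the negative parts; since $(Tx)^+\wedge(Tx)^-=0$ holds automatically, (c) follows at once.

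The implication (c)~$\Rightarrow$~(a) is the one that needs the most care, and I would split it into three steps. First, I would show that $T\ge 0$: if $x\ge 0$ then $x^-=0$, so (c) gives $Tx\wedge 0=0$, that is, $Tx\ge 0$. Second, for an arbitrary $x\in E$ the element $Tx$ is written as $Tx=T(x^+)-T(x^-)$, a difference of the elements $T(x^+)$ and $T(x^-)$, which are positive by Step~1 and disjoint by~(c); by uniqueness of such a decomposition, $T(x^+)=(Tx)^+$ for every $x\in E$. Third, for $x,y\in E$ I would use $x\vee y=y+(x-y)^+$ to obtain $T(x\vee y)=Ty+T\bigl((x-y)^+\bigr)=Ty+\bigl(T(x-y)\bigr)^+=Ty+(Tx-Ty)^+=Tx\vee Ty$, and then deduce the companion identity for $\wedge$ from $x\wedge y=-\bigl((-x)\vee(-y)\bigr)$ and linearity.

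I do not anticipate a serious obstacle; the only point to watch is the order within the last implication --- positivity of $T$ (Step~1) must be extracted from~(c) before it can be used in Step~2, since it is precisely the disjointness hypothesis that supplies it. If one prefers to sidestep the explicit appeal to uniqueness of the disjoint decomposition, an alternative is to prove (c)~$\Rightarrow$~(b) directly: once $T\ge 0$ one has $|Tx|=(Tx)^++(Tx)^-=T(x^+)+T(x^-)=T|x|$, and the cycle then closes via (b)~$\Rightarrow$~(a), using $x\vee y=\tfrac12\bigl(x+y+|x-y|\bigr)$.
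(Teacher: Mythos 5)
Your proof is correct. The paper offers no proof of this proposition --- it is quoted verbatim (with the citation) from Meyer-Nieberg, where condition (c) is written $Tx^+\wedge Tx^-=0$; your reading of $T^+x$ as $T(x^+)$ is the intended one (the alternative reading via the modulus decomposition of the operator would fail even for the identity map), and your cycle (a)$\Rightarrow$(b)$\Rightarrow$(c)$\Rightarrow$(a), with positivity of $T$ extracted from (c) before invoking uniqueness of the disjoint decomposition, is essentially the standard textbook argument. One tiny quibble: in your proposed ``sidestep'' at the end, the step $(Tx)^++(Tx)^-=T(x^+)+T(x^-)$ still tacitly uses $(Tx)^\pm=T(x^\pm)$, i.e.\ the same uniqueness fact (or equivalently the identity $|u-v|=u+v$ for disjoint $u,v\ge 0$), so nothing is really avoided --- but the main proof stands on its own.
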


For more information on Banach lattices and operators on Banach lattices we
refer the reader to \cite{AB,MN}.

\section{Sums of Powers of Linear Functionals}
It is shown in \cite[Proposition~2]{BRS} that if $E$ is a Banach lattice,
$m>1$ a
positive integer and $\varphi$  a (continuous) linear functional then
$\varphi^
m$ is orthogonally additive if and only if $\varphi$ or $-\varphi$ is a 
homomorphism. Let us
now generalise this result to sums of of powers of linear functionals. We begin
with the following proposition.

\begin{proposition}\label{deriv}
  Let $E$ be a Banach lattice, $k$, $m$ positive integers with $k<m$ and
  $P$ an orthogonally additive $m$-homogeneous polynomial on $E$. Then for
  every $x$ in $E$,
  ${\hat d}^{k}P(x)$ is an orthogonally additive $k$-homogeneous polynomial on
  $E$.
\end{proposition}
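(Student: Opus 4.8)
The plan is to pass to the associated symmetric multilinear forms and to use the orthosymmetry characterisation of orthogonal additivity recalled above from \cite[Lemma~4.1]{BB}. Write $P=\hat A$, where $A$ is the symmetric $m$-linear form associated with $P$, and fix $x\in E$. From the (binomial) Taylor identity $P(x+ty)=\sum_{j=0}^{m}\binom{m}{j}t^{j}A(x^{m-j},y^{j})$ (valid for all real $t$) one reads off, by differentiating $k$ times at $t=0$, that the $k$-th derivative of $P$ at $x$ is the continuous $k$-homogeneous polynomial
\[
  \hat d^{k}P(x)(y)=\frac{m!}{(m-k)!}\,A(\underbrace{x,\dots,x}_{m-k},\underbrace{y,\dots,y}_{k}),\qquad y\in E .
\]

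Next I would identify the symmetric $k$-linear form associated with $\hat d^{k}P(x)$. The map
\[
  B_{x}(y_{1},\dots,y_{k}):=\frac{m!}{(m-k)!}\,A(\underbrace{x,\dots,x}_{m-k},y_{1},\dots,y_{k})
\]
is $k$-linear, symmetric in $(y_{1},\dots,y_{k})$ because $A$ is symmetric, and agrees with $\hat d^{k}P(x)$ on the diagonal; hence, by uniqueness of the associated symmetric form, $B_{x}$ \emph{is} the symmetric $k$-linear form associated with $\hat d^{k}P(x)$. Since $P$ is orthogonally additive, \cite[Lemma~4.1]{BB} gives that $A$ is orthosymmetric: $A(z_{1},\dots,z_{m})=0$ whenever $|z_{i}|\wedge|z_{j}|=0$ for some $i\neq j$. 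Now if $y_{1},\dots,y_{k}\in E$ satisfy $|y_{i}|\wedge|y_{j}|=0$ for some $i\neq j$, then the $m$-tuple $(x,\dots,x,y_{1},\dots,y_{k})$ contains the disjoint pair $y_{i},y_{j}$, so $B_{x}(y_{1},\dots,y_{k})=0$. Thus $B_{x}$ is orthosymmetric, and a second application of \cite[Lemma~4.1]{BB} shows that $\hat d^{k}P(x)$ is orthogonally additive.

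The only step requiring care is the bookkeeping above — confirming that freezing $m-k$ arguments of $A$ at $x$ yields a (nonzero) scalar multiple of $\hat d^{k}P(x)$ and that the resulting multilinear form is the symmetric one — rather than any genuine analytic obstacle; once this is in place the conclusion is immediate, since orthosymmetry of $A$ passes at once to $B_{x}$. I note in passing that the argument works verbatim for every $k\le m$ (for $k=m$ it merely says $\hat d^{m}P=m!\,P$ is again orthogonally additive); the hypothesis $k<m$ is imposed only because that is the range in which this proposition will be used in the sequel.
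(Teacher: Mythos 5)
Your proof is correct and follows essentially the same route as the paper: pass to the associated symmetric $m$-linear form $A$, observe via \cite[Lemma~4.1]{BB} that orthogonal additivity of $P$ is equivalent to orthosymmetry of $A$, identify the symmetric $k$-linear form of $\hat d^{k}P(x)$ as $\frac{m!}{(m-k)!}A(x,\dots,x,\cdot,\dots,\cdot)$, and note that orthosymmetry is inherited by this restriction. The paper's proof is simply a terser version of the same argument, and your added bookkeeping (uniqueness of the associated symmetric form, the explicit disjoint-pair check) is accurate.
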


\begin{proof} As $P$ is orthogonally additive the symmetric $m$-linear mapping
  $A$ associated with $P$, is orthosymmetric. Since the symmetric
  $k$-linear mapping associated with ${\hat d}^{k}P(x)$ is $\frac{m!}{(m-k)
    !}A(\underbrace{x,\ldots,x}_{m-k},\dots)$  we see
  that ${\hat d}^{k}P(x)$ is an orthogonally additive $k$-homogeneous
  polynomial.
\end{proof}  

In the statement of the following theorem we require a $m$-homogeneous
polynomial written in the form $\sum_{j=1}^k\lambda_j\varphi_j^m$  with
$\varphi_i
\not=\lambda\varphi_j$. We note that given any $m$ homogeneous polynomial of 
the form $\sum_{j=1}^k\lambda_j \varphi_j^m$ we can amalgamate terms and rewrite
it as $\sum_{i=1}^l\mu_i\psi_i^m$ with $l\le k$ where $\psi_1,\ldots,\psi_l$ are
pairwise independent.

\begin{theorem}\label{hom}
  Let $E$ be a Banach lattice and let $k$, $m$ be positive integers with $k<m$.
  Let $\lambda_1,\lambda_2,\ldots,\lambda_k$ be non-zero scalars and let
  $\varphi_1,
  \varphi_2,\ldots,\varphi_k$ be pairwise independent linear functionals on
  $E$ ($\varphi_i\not=\lambda\varphi_j$ for $i\not= j$). Then $P=\sum_{j=1}^k
\lambda_j
  \varphi_j^m$ is
  orthogonally additive if and only if $\varphi_j$ or $-\varphi_j$ is a lattice
  homomorphism for each $j$, $1\le j\le k$.
\end{theorem}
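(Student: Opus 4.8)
The plan is to deduce the nontrivial (``only if'') implication from the single‑functional case \cite[Proposition~2]{BRS} by isolating one $\varphi_{i_0}$ at a time. The ``if'' implication is the routine one: by the converse half of \cite[Proposition~2]{BRS} each $\varphi_j^m$ is orthogonally additive whenever $\varphi_j$ or $-\varphi_j$ is a lattice homomorphism (note $(-\varphi_j)^m=(-1)^m\varphi_j^m$, so it does not matter which of $\pm\varphi_j$ works), and a finite linear combination of orthogonally additive polynomials is again orthogonally additive; hence $P=\sum_j\lambda_j\varphi_j^m$ is orthogonally additive.

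For the ``only if'' direction we may assume $k\ge 2$ (the case $k=1$ being exactly \cite[Proposition~2]{BRS}), so that all the $\varphi_i$ are nonzero by pairwise independence. Let $A$ be the symmetric $m$-linear form of $P$, so that $A(z_1,\dots,z_m)=\sum_{i=1}^k\lambda_i\varphi_i(z_1)\cdots\varphi_i(z_m)$, which is orthosymmetric by \cite[Lemma~4.1]{BB}. Fix a disjoint pair $u,v\in E$ (so $|u|\wedge|v|=0$). Then $A(u,v,z_3,\dots,z_m)=0$ for all $z_3,\dots,z_m\in E$, and, writing $c_i=\lambda_i\varphi_i(u)\varphi_i(v)$, this says precisely that the $(m-2)$-homogeneous polynomial $\sum_{i=1}^k c_i\varphi_i^{\,m-2}$ vanishes identically. (For $m=2$ this reads $\sum_i c_i=0$ with $k=1$, i.e.\ it is \cite[Proposition~2]{BRS} itself, so one may assume $m\ge 3$. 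Alternatively, one reaches the same identity through Proposition~\ref{deriv}: $\hat d^{\,m-2}P(x)$ is an orthogonally additive quadratic for every $x$, and orthosymmetry of its associated bilinear form applied to $(u,v)$ gives $\sum_i c_i\varphi_i(x)^{m-2}=0$ for all $x$.)

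Everything now rests on the following algebraic fact, which I would isolate as a separate lemma: if $\psi_1,\dots,\psi_r$ are pairwise non-proportional nonzero linear functionals on a real vector space and $r\le n+1$, then $\psi_1^{\,n},\dots,\psi_r^{\,n}$ are linearly independent in the space of $n$-homogeneous polynomials. Applying it with $r=k$ and $n=m-2$, which is legitimate precisely because $k<m$, i.e.\ $k\le m-1=(m-2)+1$, the identity above forces $c_i=0$ for every $i$; in particular $\lambda_{i_0}\varphi_{i_0}(u)\varphi_{i_0}(v)=0$, so $\varphi_{i_0}(u)\varphi_{i_0}(v)=0$. As $u,v$ ranged over all disjoint pairs, the bilinear form $(u,v)\mapsto\varphi_{i_0}(u)\varphi_{i_0}(v)$ is orthosymmetric, i.e.\ $\varphi_{i_0}^2$ is orthogonally additive, and \cite[Proposition~2]{BRS} gives that $\varphi_{i_0}$ or $-\varphi_{i_0}$ is a lattice homomorphism. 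Since $i_0$ was arbitrary, the theorem follows.

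The lemma is where I expect the real work to lie, and it is here that the hypothesis $k<m$ is consumed: for $k=m$ the lemma is false, which is precisely what the promised counterexamples will exploit. I would prove it by induction on $r$, the case $r=1$ being immediate (as $\psi_1\ne 0$). For $r\ge 2$ we have $n\ge 1$; suppose $\sum_{i=1}^r c_i\psi_i^{\,n}=0$ with, say, $c_1\ne 0$. Since no $\psi_i$ with $i\ge 2$ is proportional to $\psi_1$, each $\ker\psi_1\cap\ker\psi_i$ is a proper subspace of the hyperplane $\ker\psi_1$, and a real vector space is never a finite union of proper subspaces, so we may pick $h$ with $\psi_1(h)=0$ and $\psi_i(h)\ne 0$ for all $i\ge 2$. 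Differentiating the relation in the direction $h$ annihilates the $i=1$ term and leaves $\sum_{i=2}^r\bigl(n\,\psi_i(h)\,c_i\bigr)\psi_i^{\,n-1}=0$, a relation among $r-1\le(n-1)+1$ pairwise non-proportional $(n-1)$st powers; by the inductive hypothesis all its coefficients vanish, so $c_i=0$ for $i\ge 2$, whence $c_1\psi_1^{\,n}=0$ and $c_1=0$, a contradiction.
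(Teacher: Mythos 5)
Your proof is correct, but it takes a genuinely different route from the paper's. The paper argues by induction on $m$: for $P=\sum_j\lambda_j\varphi_j^{m+1}$ it fixes an index $r$, picks $x$ with $\varphi_r(x)\neq 0$ and $\varphi_s(x)=0$ for some $s\neq r$, and applies Proposition~\ref{deriv} to see that ${\hat d}^{m}P(x)=(m+1)!\sum_{j}\lambda_j\varphi_j(x)\varphi_j^{m}$ is an orthogonally additive sum of at most $l-1$ $m$-th powers, so the induction hypothesis applies; the base case is \cite[Proposition~2]{BRS}. You instead apply orthosymmetry of the associated symmetric form once, to a disjoint pair $(u,v)$ placed in two slots, obtaining the identity $\sum_i \lambda_i\varphi_i(u)\varphi_i(v)\varphi_i^{\,m-2}=0$, and then invoke a purely algebraic lemma: at most $n+1$ pairwise non-proportional $n$-th powers of functionals are linearly independent. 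This forces $\varphi_i(u)\varphi_i(v)=0$ for every $i$ and every disjoint pair, reducing each $\varphi_i$ to the quadratic case of \cite[Proposition~2]{BRS}. Both arguments consume the hypothesis $k<m$ through a Vandermonde-type independence and both use a differentiation trick (the paper inside the induction, you inside the lemma), but your version isolates the algebraic content cleanly: the lemma makes it transparent exactly why the bound $k\le m-1$ is needed and why the result fails at $k=m$ (where the $(m-2)$-nd powers can be dependent --- this is precisely what the paper's counterexamples exploit), and it avoids having to track, at each inductive step, that the surviving terms still have nonzero coefficients and pairwise independent functionals. The paper's induction is shorter on the page because it leans on Proposition~\ref{deriv} and the induction hypothesis, whereas your lemma carries the real work; I checked the lemma's induction on $r$ (the choice of $h\in\ker\psi_1$ outside each $\ker\psi_i$, $i\ge 2$, is legitimate since a real vector space is not a finite union of proper subspaces, and the directional derivative drops $(r,n)$ to $(r-1,n-1)$ while preserving $r-1\le(n-1)+1$) and it is sound.
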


\begin{proof} If for each $j$, $\varphi_j$ or $-\varphi_j$ is a lattice
  homomorphism then $P$ is orthogonally additive. 

  To prove the converse we use induction on $m$. By
  \cite[Proposition~2]{BRS} the result is true when $k=1$ and $m=2$.

  Let us now suppose that the result is true for $m$ and every $k$ with $k<m$.
  We will now establish the result for every $l<m+1$.

  Suppose that $P=\sum_{j=1}^{l}\lambda_j\varphi_j^{m+1}$ is an orthogonally
  additive polynomial.
  Given $1\le r\le l$ choose $s$, $1\le s\le l$, so that $s\not= r$. Since
  $\varphi_r$ and
  $\varphi_s$ are linearly independent, we can choose $x$ in $E$ so that
  $\varphi_r(x)\not=0$ and $\varphi_s(x)=0$. Using Proposition~\ref{deriv}, we
  have that ${\hat d}^mP(x)$ is orthogonally additive. 
  
  Then
  \begin{align*}
    {\hat d}^mP(x)&=(m+1)!\sum_{j=1}^{l}\lambda_j\varphi_j(x)\varphi_j^m\\
                 &=(m+1)!\sum_{j=1,j\not=s}^{l}\lambda_j\varphi_j(x)\varphi_j^m
  \end{align*}
  is a sum of at most $l-1$ linear functionals each raised to the power of $m$.
  Hence, by our induction hypothesis, we have that for each $j$ with
  $\varphi_j(x)\not=0$ either  $\varphi_j$ or $-\varphi_j$ is a homomorphism.
  In particular, we get that either $\varphi_r$ or $-\varphi_r$
  is a homomorphism. As $r$ was arbitrary we conclude that $\varphi_j$ or
  $-\varphi_j$ is a homomorphism for $1\le j\le l$.
\end{proof}

We claim that the above theorem is sharp. This is particularly easy to see
when $k=m=2$. Let us take $E$ to be $\mathbb{R}^2$ with the standard order.
 Let $\varphi_1(x)=x_1+x_2$ and $\varphi_2
 (x)=x_1-x_2$. It follows from Proposition~\ref{Charhomo} that neither
 $\varphi_1$ or $\varphi_2$ are homomorphisms.
However,
$$
\varphi_1^2(x)+\varphi_2^2(x)=2(x_1^2+x_2^2)
$$
is orthogonally additive.

 More generally, let us show that for
each integer $m$ we can find linear functionals $\varphi_1,\ldots,\varphi_m$ on
$\mathbb{R}^2$, none of which or their negatives are homomorphisms, such
that $\sum_{j=1}^m
\lambda_j\varphi_j^m$ is orthogonally additive.

We will see that it is possible to choose $A_1,\ldots, A_n$ so that
$$
\sum_{r=1}^nA_r\left((rx_1+x_2)^{2n}+(rx_1-x_2)^{2n}\right)=B_1x_1^{2n}+B_2x_2
^{2n}.
$$

Let us first suppose that $m=2n$ is an even integer. Then we have
\begin{align*}
  (x_1+x_2)^{2n}+(x_1-x_2)^{2n}&= 2\sum_{j=0}^n{2n\choose 2(n-j)}x_1^{2(n-j)}
                                 x_2^{2j}\\
  (2x_1+x_2)^{2n}+(2x_1-x_2)^{2n}&= 2\sum_{j=0}^n{2n\choose 2(n-j)}2^{2(n-j)}
                                   x_1^{2(n-j)}x_2^{2j}\\
                               &\vdots\\
  (nx_1+x_2)^{2n}+(nx_1-x_2)^{2n}&=2\sum_{j=0}^n{2n\choose 2(n-j)}n^{2(n-j)}
                                   x_1^{2(n-j)}x_2^{2j}.
\end{align*}

Adding we get that
\begin{align*}
  \sum_{r=1}^nA_r&\left((rx_1+x_2)^{2n}+(rx_1-x_2)^{2n}\right)\\
 &=\sum_{r=1}^n
  A_r\left(2\sum_{j=0}^n{2n\choose 2(n-j)}r^{2(n-j)}x_1^{2(n-j)}x_2^{2j}\right)
  \\
 &=2\sum_{j=0}^n{2n\choose 2(n-j)}\left(\sum_{r=1}^n A_r r^{2(n-j)}\right)
  x_1^{2(n-j)}x_2^{2j}.
\end{align*}

We now show that it is possible to choose $A_1,\ldots, A_n$
so that
$$
\sum_{r=1}^n A_r r^{2n}\not=0,
$$
and
$$
\sum_{r=1}^n A_r r^{2(n-j)}=0
$$
for $j=1,\ldots,n-1$.

Without loss of generality let us suppose we wish to have $\sum_{r=1}^nA_r 
r^{2n}=1$.
We rewrite these equations in matrix form as
$$
\begin{pmatrix}
  1 & 2^2 & 3^2 &\ldots & n^2\\
  1 & 2^4 & 3^4 &\ldots & n^4\\
  \vdots &\vdots &\vdots & \ddots & \vdots\\
    1 & 2^{2(n-1)} & 3^{2(n-1)} &\ldots & n^{2(n-1)}\\
  1 & 2^{2n} & 3^{2n} &\ldots & n^{2n}
\end{pmatrix}\begin{pmatrix}A_1\\ A_2\\ \vdots\\ A_{n-1}\\ A_n\end{pmatrix}
  =\begin{pmatrix}0\\ 0\\ \vdots\\ 0\\ 1\end{pmatrix}.
$$  
The transpose of the matrix
$$
\begin{pmatrix}
  1 & 2^2 & 3^2 &\ldots & n^2\\
  1 & 2^4 & 3^4 &\ldots & n^4\\
  \vdots &\vdots &\vdots & \ddots & \vdots\\
    1 & 2^{2(n-1)} & 3^{2(n-1)} &\ldots & n^{2(n-1)}\\
  1 & 2^{2n} & 3^{2n} &\ldots & n^{2n}
\end{pmatrix}
$$
is
$$
\begin{pmatrix} 1 & 1 &\ldots & 1\\
   2^{2} & 2^{4} &\ldots & 2^{2n}\\
   3^{2} & 3^{4} &\ldots & 3^{2n}\\
  \vdots &\vdots & \ddots & \vdots\\
  (n-1)^2 & (n-1)^4 &\ldots & (n-1)^{2n}\\
  n^2 & n^4  &\ldots & n^{2n}
\end{pmatrix}
$$
As this matrix has the same determinant as the Vandermonde matrix
$$
\begin{pmatrix} 1 & 0 & 0 &\ldots &0\\
 1 & 1 & 1 &\ldots & 1\\
 1 & 2^{2} & 2^{4} &\ldots & 2^{2n}\\
 1 &  3^{2} & 3^{4} &\ldots & 3^{2n}\\
 \vdots &  \vdots &\vdots & \ddots & \vdots\\
 1 & (n-1)^2 & (n-1)^4 &\ldots & (n-1)^{2n}\\
 1 & n^2 & n^4  &\ldots & n^{2n}
\end{pmatrix},
$$
it is invertible. Therefore we can find $A_1,\ldots, A_n$ so that
\begin{equation}\label{long}
\sum_{r=1}^nA_r\left((rx_1+x_2)^{2n}+(rx_1-x_2)^{2n}\right)=x_1^{2n}+B_2x_2
^{2n}
\end{equation}
and hence $\sum_{r=1}^nA_r\left((rx_1+x_2)^{2n}+
  (rx_1-x_2)^{2n}\right)$ is orthogonally additive.

Taking the $(2n-1)^{st}$ derivative of both sides of (\ref{long}) at the
point $(1,1)$  we get that
$$
  \sum_{r=1}^nA_r\left((r+1)(rx_1+x_2)^{2n-1}+(r-1)(rx_1-x_2)^{2n-1}\right)= 
x_1^{2n-1}+B_2x_2^{2n-1},
$$
or equivalently
\begin{align*}
2A_1(x_1+x_2)^{2n-1}&+\sum_{r=2}^nA_r\left((r+1)(rx_1+x_2)^{2n-1}
  +(r-1)(rx_1-x_2)^{2n-1}\right)\\
                                                  &= x_1^{2n-1}+B_2x_2^{2n-1},
\end{align*}
giving us an orthogonally additive sum of $2n-1$ linear functionals, each raised
to the power of $2n-1$, none of which, nor their negatives, by
Proposition~\ref{Charhomo}, is a homomorphism.

\noindent Christopher Boyd, School of Mathematics \& Statistics, University
College Dublin, \hfil\break
Belfield, Dublin 4, Ireland.\\
e-mail: christopher.boyd@ucd.ie

\medskip

\noindent Raymond A. Ryan, School of Mathematics, Statistics and Applied 
Mathematics, National University of Ireland Galway, Ireland.\\
e-mail: ray.ryan@nuigalway.ie

\medskip

\noindent Nina Snigireva, School of Mathematics \& Statistics, University
College Dublin, \hfil\break
Belfield, Dublin 4, Ireland.\\
e-mail: nina@maths.ucd.ie
\end{document}